\newtheorem{thm}{Theorem}
\newtheorem{defi}[thm]{Definition}
\newtheorem{prop}[thm]{Proposition}
\newtheorem{rmk}[thm]{Remark}
\newtheorem{lem}[thm]{Lemma}
\theoremstyle{example}
\newcommand{\mH}{\mathfrak{H}}
\newcommand{\CC}{\mathcal{C}}
\newcommand{\cast}{\circledast}
\newcommand{\tast}{\stackrel{t}{\ast}}
\newcommand{\tcast}{\stackrel{t}{\circledast}}
\def\Q{\mathbb {Q}}
\def\R{\mathbb {R}}
\def\C{\mathbb {C}}
\def\h{\hbar}
\def\Ht{\mH_{t}}
\def\Hht{\mH_{\h, t}}
\def\c{\cdot}
\def\cs{\cdots}
\def\ls{\ldots} 
\def\f{\frac} 
\def\d{\displaystyle}
\def\p{{\mathbbm p}} 
\def\z{\zeta}
\def\s{\zeta^{\star}}
\def\q{\zeta_{q}}
\def\qs{\zeta^{\star}_{q}}
\def\tq{\zeta^{t}_{q}}
\def\cp{\circ_{+}}
\def\ap{\ast_{+}}
\def\vp{\varphi}
\def\mz{\mathfrak{z}}
\title{Interpolation of $q$-analogue of multiple zeta and zeta-star values}
\date{}
\author{Noriko Wakabayashi\thanks{College of Science and Engineering, Ritsumeikan University, 1-1-1, Nojihigashi, Kusatsu-city, Shiga 525-8577, Japan\newline
E-mail address : noriko-w@fc.ritsumei.ac.jp}}
\begin{document} 
\maketitle
\begin{abstract}
We know at least two ways to generalize multiple zeta(-star) values, or MZ(S)Vs for short, which are $q$-analogue and $t$-interpolation. 
The $q$-analogue of MZ(S)Vs, or $q$MZ(S)Vs for short, was introduced by Bradley, Okuda and Takeyama, Zhao, etc. 
On the other hand, the polynomials interpolating MZVs and MZSVs using a parameter $t$ were introduced by Yamamoto. 
We call these $t$-MZVs. 

In this paper, 
we consider such two generalizations simultaneously, that is, 
we compose polynomials, called $t$-$q$MZVs, interpolating $q$MZVs and $q$MZSVs using a parameter $t$ which are reduced to $q$MZVs as $t=0$, to $q$MZSVs as $t=1$,  and to $t$-MZVs as $q \to 1$.
Then we prove Kawashima type relation, cyclic sum formula and Hoffman type relation for $t$-$q$MZVs. 

\end{abstract}
\section{Introduction}
For a formal parameter $q$ and an index $(k_1, k_2, \ls, k_l)$ of positive integers with $k_1 \geq 2$, $q$-analogues of multiple zeta and zeta-star values ($q$MZVs and $q$MZSVs, respectively, for short) are defined by 
\begin{align*}
\q(k_1, k_2, \ls, k_l) & = \sum_{m_1 > m_2 > \cs > m_l \geq 1} \f{q^{(k_1-1)m_1 + (k_2-1)m_2 + \cs +(k_l-1)m_l}}{{[m_1]}^{k_1}{[m_2]}^{k_2} \cs {[m_l]}^{k_l}} \ (\in \Q[[q]]),\\
\qs(k_1, k_2, \ls, k_l) & = \sum_{m_1 \geq m_2 \geq \cs \geq m_l \geq 1} \f{q^{(k_1-1)m_1 + (k_2-1)m_2 + \cs +(k_l-1)m_l}}{{[m_1]}^{k_1}{[m_2]}^{k_2} \cs {[m_l]}^{k_l}} \ (\in \Q[[q]]), 
\end{align*}
where $[n]$ denotes the $q$-integer $\d [n] =\f{1-q^n}{1-q}$. 
We often call $k_1+k_2+\cdots+k_l$ (resp. $l$) the weight (resp. the depth) of the index $(k_1, k_2, \ldots, k_l)$ or of corresponding zeta values. 
In the case of $l=1$, $q$MZVs and $q$MZSVs coincide and are reduced to 
$$\q(k)=\sum_{m \geq 1}\f{q^{(k-1)m}}{[m]^k}. $$ 
If $q \in \C$, $q$MZVs and $q$MZSVs are absolutely convergent in $|q|<1$. 
Taking the limit as $q \to 1$, $q$MZ(S)Vs turn into ordinary MZ(S)Vs given by
\begin{align*}
\z(k_1, k_2, \ls, k_l) & = \sum_{m_1 > m_2 > \cs > m_l \geq 1} \f{1}{{m_1}^{k_1}{m_2}^{k_2} \cs {m_l}^{k_l}} \ (\in \R),\\
\s(k_1, k_2, \ls, k_l) & = \sum_{m_1 \geq m_2 \geq \cs \geq m_l \geq 1} \f{1}{{m_1}^{k_1}{m_2}^{k_2} \cs {m_l}^{k_l}} \ (\in \R). 
\end{align*} 
The $q$MZ(S)Vs were investigated for example in Bradley\cite{B}, Okuda and Takeyama\cite{OT}, Zhao\cite{Z}. 

On the other hand, in \cite{Y}, Yamamoto introduced the interpolation polynomial of MZVs ($t$-MZVs for short) given by
\begin{align*}
\z^t(k_1, k_2, \ls, k_l) = \sum_{{\p}} \z({\p}) t^{l - \rm{dep}({\p})} \ (\in \R[t]), 
\end{align*}
where ${\rm dep}(\p)$ is the depth of index $\p$ and $\sum_{\p}$ stands for the sum where $\p$ runs over all indices of the form
$\p=(k_1\ \square\ \cs\ \square\ k_l)$ in which each $\square$ is filled by two candidates: the comma $``,"$ or the plus $``+"$.  

We consider such two generalizations of MZ(S)Vs simultaneously. 

\begin{defi}[Interpolated $q$-analogue of multiple zeta values ($t$-$q$MZVs)]
For positive integers $k_1, k_2, \ls, k_l$ with $k_1 \geq 2$, parameters $t$ and $q$, we define $t$-$q$MZVs by 
\begin{align*}
\tq(k_1, k_2, \ls, k_l) = {\sum_{\p}}' (1-q)^{k-\rm{wt}(\p)} \z_q({\p}) t^{l - \rm{dep}(\p)} \ (\in \Q[[q]][t]),  
\end{align*}
where $k=k_1+k_2+\cdots+k_l$, ${\rm wt}(\p)$ is the weight of index $\p$ and $\sum_{\p}'$ stands for the sum where 
$\p$ runs over all indices of the form  
$\p=(k_1\ \square\ \cs\ \square\ k_l)$ in which each $\square$ is filled by three candidates:
$``,"$, $``+"$ or $``-1+"$ (minus $1$ plus). 
\end{defi}
If $q \in \C$, $t$-$q$MZVs are absolutely convergent in $|q|<1$. 
Taking the limit as $q \to 1$, $t$-$q$MZVs turn into $t$-MZVs. 
We notice that $\q^0=\q$, $\q^1=\qs$ and $\tq(k) = \q(k) \ (k \geq 2)$. 

In $\S \ref{tqK}$, 
we show the following Kawashima type relation for $t$-$q$MZVs under the appropriate algebraic setup (see $\S \ref{subK}$ for details). 

\begin{thm}\label{tqKawa}
For any $m \geq 1$ and any $v, w \in \Hht y$, we have  
\begin{align*}
\sum_{\begin{subarray}{c} i+j=m \\ i, j\geq 1 \end{subarray}}Z_q^t\bigl( \varphi_\h^t(v) \tcast_\h (-tx + y - \hbar t)^{i-1}y \bigr)
Z_q^t \bigl( \varphi_\h^t(w)\tcast_\h (-tx + y - \hbar t)^{j-1}y \bigr) &\\
=-Z_q^t \bigl( \varphi_\h^t (S_\h^t)^{-1} (S_\h^t(v) \ast S_\h^t(w))\tcast_\h (-tx + y - \hbar t)^{m-1}y \bigr). 
\end{align*}
\end{thm}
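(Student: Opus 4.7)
The plan is to package the family of identities indexed by $m$ into a single generating-function identity and then to reduce that identity to a purely algebraic statement inside the deformed algebra $\Hht y$. Introduce a formal variable $u$ and set
\[
G_u \;:=\; \sum_{m \geq 1} u^{m-1}(-tx + y - \h t)^{m-1}\,y \;\in\; \Hht y[[u]],
\]
which formally equals $\bigl(1 - u(-tx + y - \h t)\bigr)^{-1}y$. Multiplying the claim by $u^{m-1}$ and summing over $m \geq 1$ turns the convolution $\sum_{i+j=m}$ on the left into an ordinary product of two $u$-series, so the whole family of identities becomes equivalent to the single master equation
\[
u \cdot Z_q^t\bigl(\varphi_\h^t(v)\tcast_\h G_u\bigr)\,Z_q^t\bigl(\varphi_\h^t(w)\tcast_\h G_u\bigr) \;=\; -\,Z_q^t\bigl(\varphi_\h^t(S_\h^t)^{-1}(S_\h^t(v)\ast S_\h^t(w))\tcast_\h G_u\bigr)
\]
in $\Q[[q]][t][[u]]$, to be proved coefficientwise in $u$.

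Next, I would invoke the structural properties of $Z_q^t$, $\varphi_\h^t$, $S_\h^t$ and the two products $\ast$ and $\tcast_\h$ developed in Section \ref{subK}. The expected key facts are: (i) $Z_q^t$ behaves multiplicatively on elements of the form $\varphi_\h^t(\cdot)\tcast_\h(\cdot)$, allowing the product of two $Z_q^t$-values to be folded into the value of $Z_q^t$ on a single $\tcast_\h$-product; and (ii) $S_\h^t$ intertwines $\tcast_\h$ with the harmonic product $\ast$, at least up to the twist by $\varphi_\h^t$. Using (i) first reduces the master equation to a purely algebraic identity in $\Hht y[[u]]$; applying $S_\h^t$ to both sides and invoking (ii) then converts the outer $\tcast_\h$-product on the left into a $\ast$-product, so the remaining task is an identity for $S_\h^t(G_u)$ in the harmonic algebra, namely a Newton-style formula expressing the $\ast$-product of two ``resolvent'' elements in terms of a single one of the same form.

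The hardest step will be verifying this master identity for $G_u$ in the $\ast$-algebra: the shift $-\h t$ in the kernel $-tx + y - \h t$ is precisely the correction that absorbs the defect between $\tcast_\h$ and $\ast$, and making the bookkeeping of these correction terms line up with the minus sign on the right-hand side is the crux of the argument. This is the exact $t$-$q$ analogue of the key identity underlying Kawashima's classical proof; once it is established, the theorem follows by extracting the coefficient of $u^{m-1}$. As a consistency check, the specializations $t = 0$, $t = 1$, and $q \to 1$ should recover the Kawashima-type relations for $q$MZVs, $q$MZSVs, and $t$-MZVs respectively, which provides both motivation and a check on the signs and correction terms.
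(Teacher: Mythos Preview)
Your plan misses the actual mechanism of the proof and rests on two ``expected key facts'' that are not available in the form you state them. Fact (i) --- that $Z_q^t$ is multiplicative on elements of the shape $\varphi_\h^t(\cdot)\tcast_\h(\cdot)$ --- is not proved anywhere in the setup; what is proved (Proposition~8) is that $Z_q^t$ is a homomorphism for the \emph{harmonic} product $\tast_\h$, not for $\tcast_\h$, and these are different products. Fact (ii) is also off: by the very definition \eqref{tch}, $S_\h^t$ intertwines $\tcast_\h$ with $\cast_\h$, not with the MZV harmonic product $\ast$. So after applying $S_\h^t$ you land in the $\cast_\h$-world, not the $\ast$-world, and the ``Newton-style master identity for $G_u$'' you hope to prove never materializes in the algebra you expect. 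The generating-function repackaging is harmless but does not create any leverage here.

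The paper's proof is much shorter and goes in the opposite direction: rather than proving the $t$-identity from scratch, it \emph{starts} from the already-known $t=0$ Kawashima relation for $q$MZVs (Takeyama's Theorem~4.6, recorded as \eqref{qKawa}) and simply rewrites both sides using the dictionary $Z_q^0 = Z_q^t(S_\h^t)^{-1}$, $\varphi = -S_\h^t\varphi_\h^t(S_\h^t)^{-1}$, $\cast_\h = S_\h^t\bigl((S_\h^t)^{-1}(\cdot)\tcast_\h(S_\h^t)^{-1}(\cdot)\bigr)$, together with Lemma~\ref{Sgam}(ii), which gives $(S_\h^t)^{-1}(y^i)=(-tx+y-\h t)^{i-1}y$. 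Since $S_\h^t$ is a bijection on $\Hht y$, replacing $(v,w)$ by $(S_\h^t(v),S_\h^t(w))$ yields the statement. In short: the whole $t$-deformation is engineered so that conjugation by $S_\h^t$ transports the $t=0$ result to general $t$; you should exploit that rather than rebuild Kawashima's argument inside the deformed algebra.
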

\noindent
Taking the limit as $q \to 1$, 
this formula is reduced to Kawashima type relation for $t$-MZVs proved in \cite{TW}.
If $t=0$ (resp. $t=1$), this formula is reduced to Kawashima type relation for $q$MZVs (resp. $q$MZSVs) proved in \cite{T}. 

As an application of Theorem \ref{tqKawa}, the following identity called cyclic sum formula for $t$-$q$MZVs is proved in $\S \ref{tqC}$. 

\begin{thm}\label{tqCSF}
Let $k_1, k_2, \ls, k_l$ be positive integers with $(k_1, k_2, \ls, k_l) \neq (1, 1, \ls, 1)$ and put $k=k_1 + k_2 + \cdots + k_l$. Then we have  
\begin{align*}
&\sum_{i=1}^l \sum_{j=0}^{k_i-2} \tq(k_i-j, k_{i+1}, \ls, k_l, k_1, \ls, k_{i-1}, j+1)\\
& = (1-t) \sum_{i=1}^{l}\tq(k_i+1, k_{i+1}, \ls, k_l, k_1, \ls, k_{i-1}) \\
& \quad + t^l \sum_{i=0}^{l} (k-i) (1-q)^i \binom{l}{i} \tq(k-i+1). 
\end{align*}
\end{thm}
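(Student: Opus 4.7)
The plan is to derive Theorem~\ref{tqCSF} as an application of Theorem~\ref{tqKawa} by choosing specific inputs $v, w \in \Hht y$ encoded by the index $(k_1, \ls, k_l)$, and then summing over the cyclic shifts $i = 1, \ls, l$. As sanity checks, specializing $t=0$ and $t=1$ should recover the known cyclic sum formulas for $q$MZVs and $q$MZSVs respectively, so the argument must be uniform in $t$ and must treat the new $-\hbar t$ term in Theorem~\ref{tqKawa} as the source of the $(1-q)^i$ coefficients on the right-hand side.

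First I would translate the left-hand side of the CSF into an algebraic sum in $\Hht$. Each term $\tq(k_i-j, k_{i+1}, \ls, k_{i-1}, j+1)$ corresponds to a word in the standard generators; the inner sum over $j = 0, \ls, k_i-2$ amounts to choosing an internal ``cut'' inside the $i$-th block. I would package this as $Z_q^t$ applied to a product involving $\tcast_\h$ and powers of $(-tx + y - \hbar t)$, so that the LHS of CSF becomes $\sum_{i=1}^l Z_q^t(\Phi_i)$ for a word $\Phi_i$ naturally associated with the cyclic shift starting at position $i$. Next I would apply Theorem~\ref{tqKawa} with a choice of $(v,w,m)$ tuned so that, after summing on $i$, the bilinear left-hand side of Kawashima becomes a product where one factor collapses to a boundary evaluation and the other rearranges — via cyclic symmetry — into exactly $\sum_i Z_q^t(\Phi_i)$.

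On the right-hand side of Kawashima, two kinds of contributions emerge after expanding $(-tx + y - \hbar t)^{m-1}y$. The terms in which every factor contributes the ``linear piece'' $(-tx+y)$ produce, through the action of $(S_\h^t)^{-1}(S_\h^t(v)\ast S_\h^t(w))$, the depth-decreasing part $(1-t)\sum_{i=1}^{l}\tq(k_i+1, k_{i+1}, \ls, k_{i-1})$. The terms in which $i$ of the factors contribute the scalar piece $-\hbar t = -(1-q)t$ produce the binomial $\binom{l}{i}$ weighted by $(1-q)^i$, and fully contract the remaining letters into a single variable $(-tx+y)^{k-i}y$, evaluating to $(k-i)\,\tq(k-i+1)$ up to the overall factor $t^l$. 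Summing over $i=0,1,\ls,l$ then reconstructs the second line of the CSF.

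The main obstacle will be the combinatorial bookkeeping in this last step: showing that the expansion of the Kawashima right-hand side produces exactly the coefficient $(k-i)(1-q)^i\binom{l}{i}$, rather than some shifted or alternated variant, will require a careful application of the definitions of $\varphi_\h^t$, $S_\h^t$ and $\tcast_\h$ recalled in $\S\ref{subK}$, most likely together with an induction on the depth $l$. A smaller subtlety is the handling of the excluded case $(k_1,\ls,k_l) = (1,\ls,1)$, which is precisely the boundary of admissibility needed to invoke Theorem~\ref{tqKawa}, so it must sit outside the inductive hypothesis.
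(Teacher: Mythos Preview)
Your proposal has a genuine gap: you misidentify where the coefficients $\binom{l}{i}(1-q)^i(k-i)$ come from. You claim they arise by expanding the Kawashima factor $(-tx+y-\hbar t)^{m-1}y$ and picking $i$ copies of the scalar piece $-\hbar t$. But that factor has $m-1$ terms, not $l$, and there is no reason a single choice of $m$ (or a sum over cyclic shifts) would manufacture $\binom{l}{i}$ with $l$ the depth of the index. In fact, in the paper's argument Theorem~\ref{tqKawa} is invoked only at $m=1$, where the factor degenerates to $y$ and contains no $\hbar$ at all; so the $-\hbar t$ in Kawashima's statement cannot be the source of the $(1-q)^i$'s.

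The paper's route is structurally different. One introduces a derivation-type map $\CC_{1,t}^{(\h)}:\Hht\to\Hht^{\otimes 2}$ and sets $\rho_{1,t}^{(\h)}=M_1\CC_{1,t}^{(\h)}$. Two facts are established: (a) $\rho_{1,t}^{(\h)}$ is invariant under cyclic permutation of letters, and (b) $\rho_{1,t}^{(\h)}(\check{\mH}_{\h,t}^1)\subset\ker Z_q^t$, the latter being the place where Theorem~\ref{tqKawa} (at $m=1$) enters. One then computes $\rho_{1,t}^{(\h)}$ on the element $\gamma_\h^t(z_{k_1}\cdots z_{k_l})-t^l x^{k-l}(x+\h)^l$. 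The first piece produces exactly the two sums on the left and the $(1-t)$ sum on the right of the CSF; the subtracted monomial $t^l x^{k-l}(x+\h)^l$ is what generates $\sum_{i=0}^{l}\binom{l}{i}\h^i(k-i)z_{k-i+1}$ via the ordinary binomial expansion of $(x+\h)^l$, and $\h\mapsto 1-q$ under the substitution map $f$. The hypothesis $(k_1,\ldots,k_l)\neq(1,\ldots,1)$ is used precisely to ensure that, after a cyclic permutation, this difference lies in $\check{\mH}_{\h,t}^1$, so (a) and (b) apply. Your outline does not contain the key subtraction of $t^l x^{k-l}(x+\h)^l$, which is the actual origin of the last line of the formula.
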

\noindent
Taking the limit as $q \to 1$, 
this formula is reduced to cyclic sum formula for $t$-MZVs proved in \cite{TW, Y}.
If $t=0$ (resp. $t=1$), this formula is reduced to cyclic sum formula for $q$MZVs (resp. $q$MZSVs) proved in \cite{B} (resp. \cite{OO}). 

As another easier application of Theorem \ref{tqKawa}, we state Hoffman type relation for $t$-$q$MZVs in $\S \ref{tqH}$. 
\section{Kawashima type relation for $t$-$q$MZVs}\label{tqK}
\subsection{Algebraic setup}\label{subK}
Let $\h$ be a formal variable.  Denote by $\Hht = \Q[\h, t] \langle x,y \rangle$ the non-commutative polynomial algebra over $\Q[\h, t]$ in two indeterminates $x$ and $y$, 
and by $\Hht^{1}$ and $\Hht^{0}$ its subalgebras $\Q[\h, t]+ \Hht y$ and $\Q[\h, t]+ x\Hht y$, respectively. 
Put $z_j = x^{j-1}y\ (j \geq 1)$.  We define the weight and the depth of a word $u=z_{k_1} z_{k_2} \cdots z_{k_l}$ by ${\rm wt}(u)=k_1+k_2+\cdots+k_l$ and ${\rm dep}(u)=l$, respectively. 

\begin{itemize}
\item 
Define the $\Q[\h, t]$-linear map $\widehat{Z}_q^t : \Hht^{0} \longrightarrow \Q[\h, t][[q]]$ by $\widehat{Z}_q^t(1) = 1$ and 
$$\widehat{Z}_q^t(z_{k_1} z_{k_2} \cs z_{k_l}) = \tq(k_1, k_2, \ls, k_l) \ \ \ (k_1 \geq 2). $$
We also define the substitution map  
$f : \Q[\h, t][[q]] \longrightarrow \Q[t][[q]]$ by $f : \h \longmapsto 1-q$ and set 
$$Z_q^t = f \circ \widehat{Z}_q^t. $$  
\end{itemize}

\begin{itemize}
\item 
Let $\mz$ be the $\Q[\h, t]$-submodule of $\mH_{\h, t}^1$ generated by
$A : =\{ z_j | j \geq 1\}$. 
We give the product $\cp$ : $\mz \times \mz \longrightarrow \mz$ characterized by $\Q[\h, t]$-bilinearity and
$$z_i \cp z_j  = z_{i+j} + \hbar z_{i+j-1} \ \ \ ( i,\ j \geq 1 ). $$ 
The product $\cp$ determines $\mz$-module structure on $\mH_{\h, t}^1$ by
$$z_i \cp 1 =0, \ z_i \cp (z_jw) = (z_i \cp z_j)w\ \ \ (w \in \Hht^1). $$
\end{itemize}

\begin{itemize}
\item 
The $\Q[\h, t]$-linear map 
$S^t_\h : \Hht^1 \longrightarrow \Hht^1$
is defined by $S_\h^t(1) = 1$ and
$$S_\h^t(aw) = aS_\h^t(w) + ta \cp S_\h^t(w), $$
where $a \in A$ and $w (\in \Hht^1$) is a word. 
\end{itemize}

We prove the following lemma immediately by the definition of $\cp$ and $S_\h^t$. 
\begin{lem}\label{Sgam}
For $a \in \mz$ and $w \in \Hht^1$, we have
\begin{itemize}
\item[(i)] $S_\h^t(a \cp w) = a \cp S_\h^t(w), $
\item[(ii)] $S_\h^t(wy)=\gamma_\h^t(w)y, $ 
\end{itemize}
where $\gamma_\h^t$ denotes the automorphism on $\Hht$ characterized by 
$$\gamma_\h^t(x) = x,\ \gamma_\h^t(y) = tx + y + \h t.$$  
\end{lem}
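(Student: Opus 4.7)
Both statements are $\Q[\h,t]$-linear in $w$, and a $\Q[\h,t]$-basis of $\Hht^1$ consists of $1$ together with the $z$-words $z_{j_1}\cs z_{j_l}$ (coming from the unique factorization of each word ending in $y$ into letters $z_j=x^{j-1}y$). Hence in both parts it suffices to treat the base case $w=1$ and the inductive case $w=z_j w'$, where $w'$ is a $z$-word of strictly smaller depth, and I argue by induction on depth. For (i), bilinearity of $\cp$ further reduces me to $a=z_i\in A$.

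For (i), the base $w=1$ is immediate from $z_i\cp 1=0$. The inductive step rests on the associativity $(z_i\cp z_j)\cp z_k=z_i\cp(z_j\cp z_k)$, checked by expanding both as $z_{i+j+k}+2\h z_{i+j+k-1}+\h^2 z_{i+j+k-2}$. With this in hand, I expand
$$S_\h^t\bigl(z_i\cp(z_j w')\bigr)=S_\h^t\bigl((z_i\cp z_j)\,w'\bigr)=S_\h^t(z_{i+j}w')+\h\,S_\h^t(z_{i+j-1}w')$$
via the recursion $S_\h^t(aw)=aS_\h^t(w)+ta\cp S_\h^t(w)$, and I separately expand $z_i\cp S_\h^t(z_j w')$ and push the outer $z_i\cp$ past $z_j$ (using associativity for the $t$-term). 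Both computations collapse to $(z_i\cp z_j)S_\h^t(w')+t(z_i\cp z_j)\cp S_\h^t(w')$, completing the step.

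For (ii), the base $w=1$ reads $S_\h^t(y)=y=\gamma_\h^t(1)y$, immediate from $z_1\cp 1=0$. For $w=z_j w'$, I expand $S_\h^t(z_j w'\,y)=z_j\,S_\h^t(w'y)+t\,z_j\cp S_\h^t(w'y)$ and apply the inductive hypothesis to replace $S_\h^t(w'y)$ by $\gamma_\h^t(w')y$. The key bridging identity is
$$z_j\cp(uy)=x^j(uy)+\h\,x^{j-1}(uy)\qquad(u\in\Hht),$$
which follows from the definition of $\cp$ on each $z$-word $z_{a_1}\cs z_{a_r}\in\Hht y$ together with $z_j\cp z_{a_1}=z_{j+a_1}+\h z_{j+a_1-1}=(x^j+\h x^{j-1})z_{a_1}$. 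Combining this with $\gamma_\h^t(z_j)=x^{j-1}(tx+y+\h t)=z_j+tx^j+\h t\,x^{j-1}$, both sides of $S_\h^t(wy)=\gamma_\h^t(w)y$ match term by term.

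The only real obstacle is notational: translating between the $z$-alphabet $A$ used by $S_\h^t$ and $\cp$ and the $(x,y)$-alphabet used by the automorphism $\gamma_\h^t$. The bridge identity for $z_j\cp$ above does exactly this translation, and once it is in place both parts reduce to routine inductive expansions of the recursive definition of $S_\h^t$.
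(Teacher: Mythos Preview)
Your proof is correct and complete. The paper itself gives no detailed argument for this lemma, stating only that it follows ``immediately by the definition of $\cp$ and $S_\h^t$''; your induction on depth, together with the associativity check for $\cp$ and the bridging identity $z_j\cp(uy)=(x^j+\h x^{j-1})(uy)$, is precisely the unwinding of definitions that the paper omits.
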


%
By the definition of $S_\h^t$ or Lemma \ref{Sgam} (ii), we find that  
\begin{align}
Z^t_q =  Z_q^0 S_\h^t, \label{Ztq}
\end{align}
where $Z_q^0(w) : = Z_q^t  (w) |_{t=0}$ ($w \in \Hht^0$). 

\begin{itemize}
\item 
We let 
\begin{align}
\vp_\h^t = -(S_\h^t)^{-1} \vp S_\h^t, \label{vp}
\end{align}
where $\vp$ denotes the automorphism on $\Hht$ characterized by $\vp(x) =x+y$, $\vp(y)=-y$. 

\item
The harmonic product $\ap$ on $\mH_{\h, t}^1$ is defined by the $\Q[\h, t]$-bilinearity and  
\begin{align*}
1 \ap w & = w \ap 1 = w,\\
z_i u \ap z_j v & = z_i (u \ap z_j v) + z_j (z_i u \ap v) + (z_i \cp z_j)(u \ap v) 
\end{align*}
for $i, j \geq 1$ and words $u, v, w \in \mH_{\h, t}^1$. 
\end{itemize}   
\begin{lem}\label{Sap}
\begin{itemize}
\item[(i)] For any $k, l \geq 1$, $v \in \Hht y$, $w \in \Hht^1$, we have
$$\gamma_\h^t (z_k) v \ap z_l w = \gamma_\h^t(z_k) (v \ap z_l w) +z_l \bigl( \gamma_\h^t(z_k)v \ap w \bigr) +(1-t)(z_k \cp z_l)(v \ap w). $$
\item[(ii)] For any $k, l \geq 1$, $v, w \in \Hht^1$, we have 
$$S_\h^t(z_k v) \ap z_l w = \gamma_\h^t(z_k) \bigl( S_\h^t(v) \ap z_l w \bigr) +z_l \bigl( S_\h^t(z_k v) \ap w \bigr) + (1-t) (z_k \cp z_l) \bigl( S_\h^t(v) \ap w \bigr). $$
\item[(iii)] For any $k \geq 0$, $l \geq 1$, $v=z_pV \ (p \geq 1, V \in \Hht^1)$ and $w \in \Hht y$, 
\begin{align*}
x^k v \ap \gamma_\h^t(z_l) w = z_{k+p} \bigl( V \ap \gamma_\h^t(z_{l})w \bigr) + \gamma_\h^t(z_l)(x^k v \ap w) +(1-t)(z_{k+p} \cp z_l)(V \ap w). 
\end{align*}
\end{itemize}
\end{lem}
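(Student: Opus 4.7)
The plan is to prove (i) by direct expansion using $\gamma_\h^t(z_k) = z_k + tx^k + \h t x^{k-1}$ together with the defining recursion of $\ap$, and then to deduce (ii) and (iii) as short corollaries of (i).

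For (i), by $\Q[\h,t]$-bilinearity I may assume $v$ is a single word in $\Hht y$, so $v = z_p v'$ with $p \geq 1$ and $v' \in \Hht^1$. Since $x^a z_b = z_{a+b}$ for $b \geq 1$, the above expression for $\gamma_\h^t(z_k)$ gives
\begin{align*}
\gamma_\h^t(z_k) v = z_k z_p v' + t z_{k+p} v' + \h t z_{k+p-1} v',
\end{align*}
so each summand begins with a letter of the form $z_\ast$ and the harmonic recursion can be applied to each of the three terms of $\gamma_\h^t(z_k) v \ap z_l w$. On the right-hand side I first expand $v \ap z_l w = z_p v' \ap z_l w$ by the same recursion, then prepend the three-summand expression for $\gamma_\h^t(z_k)$, and finally collect against $z_l(\gamma_\h^t(z_k) v \ap w)$ and $(1-t)(z_k \cp z_l)(v \ap w)$. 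A term-by-term match closes the identity; the coefficient $(1-t)$ is calibrated precisely so that the $t(z_k \cp z_l)(v \ap w)$ produced by prepending $\gamma_\h^t(z_k)$ to the $z_l(v \ap w)$-piece of $v \ap z_l w$ combines with the explicit $(1-t)(z_k \cp z_l)(v \ap w)$ to recover the $(z_k \cp z_l)(v \ap w)$-contribution dictated by the harmonic recursion on the left.

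For (ii), I treat two cases. When $v = 1$, $S_\h^t(z_k) = z_k$ (since $z_k \cp 1 = 0$), and both sides reduce to the same expression using $\gamma_\h^t(z_k)(z_l w) = z_k z_l w + t(z_k \cp z_l)w$, which already appears in the proof of (i). When $v = v'y \in \Hht y$, Lemma~\ref{Sgam}(ii) gives $S_\h^t(z_k v) = \gamma_\h^t(z_k v')y = \gamma_\h^t(z_k) S_\h^t(v)$, so the left-hand side becomes $\gamma_\h^t(z_k) S_\h^t(v) \ap z_l w$; since $S_\h^t(v) \in \Hht y$, part (i) applied with $v$ replaced by $S_\h^t(v)$ produces the right-hand side. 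For (iii), commutativity of $\ap$—which is immediate from the symmetric form of its defining recursion together with $z_i \cp z_j = z_j \cp z_i$—converts the left-hand side to $\gamma_\h^t(z_l) w \ap z_{k+p} V$, where I have used $x^k v = z_{k+p} V$. Applying part (i) with the substitution $(k, v, l, w) \mapsto (l, w, k+p, V)$, whose hypotheses are met because $w \in \Hht y$ and $V \in \Hht^1$, and then commuting back delivers (iii).

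The main obstacle is purely the bookkeeping in part (i): three summands of $\gamma_\h^t(z_k) v$ each expanded by the three-term harmonic recursion produce nine terms to be matched against a comparable expansion of the right-hand side. Parts (ii) and (iii) are then short.
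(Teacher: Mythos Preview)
Your proposal is correct. For parts (i) and (ii) it is essentially the paper's argument: the paper organizes the computation for (i) by first isolating the auxiliary identity
\[
x^k v \ap z_l w = x^k (v \ap z_l w) + z_l(x^k v \ap w) - z_{k+l}(v \ap w)
\]
and then adding the basic harmonic recursion plus $t$ and $\h t$ multiples of this identity (with $k\mapsto k-1$), which is exactly your nine-term expansion repackaged; part (ii) is obtained in both cases by substituting $S_\h^t(v)$ for $v$ via Lemma~\ref{Sgam}(ii), with the case $v=1$ handled separately.

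The genuine difference is in (iii). The paper repeats the pattern of (i), deriving a second auxiliary identity
\[
x^k v \ap x^l w = z_{k+p}(V \ap x^l w) + x^l(x^k v \ap w) - z_{k+p+l}(V \ap w)
\]
and combining it with the harmonic recursion for $x^k v \ap z_l w$. Your route---using commutativity of $\ap$ to rewrite $x^k v \ap \gamma_\h^t(z_l)w$ as $\gamma_\h^t(z_l)w \ap z_{k+p}V$ and then invoking (i) with $(k,v,l,w)\mapsto(l,w,k+p,V)$---is shorter and avoids the second computation entirely. The hypotheses match (since $w\in\Hht y$ and $k+p\geq 1$), and commutativity of $\ap$ follows by a standard induction from the symmetry of its recursion and $z_i\cp z_j=z_j\cp z_i$, so this shortcut is legitimate. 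The paper's parallel derivation, on the other hand, makes the structural analogy between (i) and (iii) explicit and does not rely on commutativity being established beforehand.
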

\begin{proof}
We show (i) first. 
It is sufficient to show the case of $v=z_pV\ (p \geq 1, V \in \Hht^1)$. 
Subtracting two identities 
\begin{align*}
x^k v \ap z_l w & = z_{k+p} V \ap z_l w \\
                         & = z_{k+p} (V \ap z_l w) + z_l (z_{k+p} V \ap w) + (z_{k+p} \cp z_l) (V \ap w), 
\end{align*}
and 
$$x^k(v \ap z_l w) = x^k z_p (V \ap z_l w) + x^k z_l (z_pV \ap w) + x^k (z_p \cp z_l)(V \ap w), $$
we have
\begin{align}\label{eq1}
x^k v \ap z_l w = x^k (v \ap z_l w) + z_l(x^k v \ap w) -z_{k+l}(v \ap w). 
\end{align}
Note that \eqref{eq1} is valid even if $k=0$. 
On the other hand,  we find by definition that
$$z_k v \ap z_l w = z_k(v \ap z_l w) + z_l (z_k v \ap w) + (z_k \cp z_l)(v \ap w). $$
Adding $t \times$\eqref{eq1} and $\h t \times$(\eqref{eq1} for $k \mapsto k-1$) to this identity, 
we conclude (i). 

By replacing $v$ with $S_\h^t(v)$ in (i) and using Lemma \ref{Sgam} (ii), we obtain (ii) for $v \in \Hht y$.  If $v=1$, we have (ii) easily by the rule of the harmonic product $\ap$. 

The identity (iii) is proved as well as the proof of (i). 
For $v=z_pV, w=z_rW \ (p, r \geq 1, V, W \in \Hht^1)$, 
subtracting two identities 
$$x^k v \ap x^l w = z_{k+p} (V \ap z_{l+r} W) + z_{l+r} (z_{k+p} V \ap W) + (z_{k+p} \cp z_{l+r}) (V \ap W), $$
and 
$$x^l(x^k v \ap w) = x^l z_{k+p} (V \ap z_r W) + x^l z_{r} (z_{k+p}V \ap W) + x^l (z_{k+p} \cp z_r)(V \ap W), $$
we have
\begin{align}\label{eq3}
x^k v \ap x^l w = z_{k+p} (V \ap z_{l+r} W) + x^l(x^k v \ap w) -z_{k+p+l}(V \ap w). 
\end{align}
Note that \eqref{eq3} is valid even if $l=0$. 
On the other hand,  we find by definition that
$$x^k v \ap z_l w = z_{k+p} (V \ap z_l w) + z_l (x^k v \ap w) + (z_{k+p} \cp z_l)(V \ap w). $$
Adding $t \times$\eqref{eq3} and $\h t \times$(\eqref{eq3} for $l \mapsto l-1$) to this identity, we conclude (iii). 
\end{proof}

\begin{itemize}
\item 
The product $\cast_\h$ on $\Hht y$ is defined by 
$$z_i u \cast_\h z_j v 
= z_{i+j}(u \ap v)$$
for $i, j \geq 1$, $u, v \in \Hht^1$. 
We define the product $\tcast_\h$ on $\Hht y$ by
\begin{align}
u \tcast_\h v =(S_\h^t)^{-1} (S_\h^t(u) \cast_\h S_\h^t(v))\ \ (u, v \in \mH_{\h, t} y). \label{tch}
\end{align}
\end{itemize}
\begin{defi}
We define the $\Q[\h, t]$-bilinear product $\tast_\h$ on $\Hht^1$ by the recursive rule  
\begin{align*}
1 \tast_\h w  & =  w \tast_\h 1 = w, \\
 z_i u \tast_\h z_j v & =  z_i ( u \tast_\h z_j v) + z_j ( z_i u \tast_\h v) + (1-2t) (z_i \cp z_j)(u \tast_\h v) \\
& \quad + (t^2-t) z_i \cp z_j \cp (u \tast_\h v)
\end{align*}
for $i, j \geq 1$ and words $u, v, w \in \Hht^1$. 
 
\end{defi}
This product is commutative and associative and can be viewed as a generalization of the products $\tast$ in \cite{Y} and $\ap$ as above or in \cite{T}. 

\begin{prop}\label{Stast}
For $v, w \in \Hht^1$, we have
$$v \tast_{\h} w = (S_\h^t)^{-1}(S_\h^t(v) \ap S_\h^t(w)). $$ 
\end{prop}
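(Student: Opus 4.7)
Since $S_\h^t$ is $\Q[\h,t]$-linear and unipotent with respect to the depth filtration on $\Hht^1$ (the defining recursion gives $S_\h^t(z_{k_1}\cdots z_{k_l}) = z_{k_1}\cdots z_{k_l} + \text{(strictly lower depth)}$), it is a bijection of $\Hht^1$, so the claim is equivalent to
\[
S_\h^t(v \tast_\h w) = S_\h^t(v) \ap S_\h^t(w), \qquad v, w \in \Hht^1,
\]
and I would prove this equivalent form by induction on $\mathrm{dep}(v) + \mathrm{dep}(w)$. The base case, in which at least one of $v,w$ equals $1$, is immediate from $1 \tast_\h u = u$, $S_\h^t(1) = 1$, and $1 \ap u = u$.

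For the inductive step, write $v = z_i U$ and $w = z_j W$ with $U,W \in \Hht^1$. On the left-hand side, apply the four-term recursion defining $\tast_\h$ and then push $S_\h^t$ inside each summand using $S_\h^t(z_k u) = z_k S_\h^t(u) + t z_k \cp S_\h^t(u)$ together with Lemma \ref{Sgam}(i) for the $\cp$-module terms; the inductive hypothesis applied to $U \tast_\h z_j W$, $z_i U \tast_\h W$ and $U \tast_\h W$ converts everything into an expression in $\ap$-products of $S_\h^t$-images. On the right-hand side, expand $S_\h^t(z_i U) = z_i S_\h^t(U) + t z_i \cp S_\h^t(U)$ and $S_\h^t(z_j W) = z_j S_\h^t(W) + t z_j \cp S_\h^t(W)$, distribute $\ap$ bilinearly, and apply Lemma \ref{Sap}(ii) to each of the four resulting pieces. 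The reduction is streamlined by the identity $\gamma_\h^t(z_k) \cdot u = z_k u + t z_k \cp u$ for $u \in \Hht y$, which follows at once from $\gamma_\h^t(z_k) = z_k + t x^k + \hbar t x^{k-1}$ together with $x^k \cdot (z_p U') = z_{k+p} U'$.

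The main obstacle is coefficient bookkeeping. The coefficient $(1-2t)$ in the definition of $\tast_\h$ must arise from the $(1-t)$ supplied by Lemma \ref{Sap}(ii) combined with a further $-t$ coming from rewriting $z_i S_\h^t(U) \ap z_j S_\h^t(W)$ modulo the correction produced by $\gamma_\h^t(z_i)$, while the $(t^2-t)$ coefficient emerges from reconciling the $t^2$ cross term $(z_i \cp S_\h^t(U)) \ap (z_j \cp S_\h^t(W))$ with the $t \cdot (z_i \cp z_j)$-corrections. The associativity and commutativity of the $\mz$-module action, $z_i \cp (z_j \cp u) = (z_i \cp z_j) \cp u = (z_j \cp z_i) \cp u$, are invoked to consolidate the depth-reducing contributions of the form $z_i \cp z_j \cp (-)$. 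Once the resulting polynomials in $t$ and $\hbar$ are collected for each of the prefix shapes $z_i\cdot(-)$, $z_j\cdot(-)$, $(z_i\cp z_j)\cdot(-)$, and $z_i \cp z_j \cp(-)$, the two sides coincide and the induction closes.
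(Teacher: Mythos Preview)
Your overall strategy—proving the equivalent statement $S_\h^t(v \tast_\h w) = S_\h^t(v) \ap S_\h^t(w)$ by induction on total depth and matching the $\tast_\h$–recursion against a recursion for $\ap$—is exactly the paper's approach. The difference is in how the right-hand side is organised, and there your plan has a misstep.

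After expanding $S_\h^t(z_iU)=z_iS_\h^t(U)+t\,z_i\cp S_\h^t(U)$ and likewise for $S_\h^t(z_jW)$ and distributing $\ap$, the four pieces you obtain are of the form $z_iA\ap z_jB$, $z_iA\ap(z_j\cp B)$, $(z_i\cp A)\ap z_jB$, $(z_i\cp A)\ap(z_j\cp B)$ with $A=S_\h^t(U)$, $B=S_\h^t(W)$. None of these individually has the shape $S_\h^t(z_kv)\ap z_lw$, so Lemma~\ref{Sap}(ii) does not apply to them. One can try instead to use the basic $\ap$–recursion together with the rule $x^kv\ap z_lw=x^k(v\ap z_lw)+z_l(x^kv\ap w)-z_{k+l}(v\ap w)$ (equation~\eqref{eq1}) on each piece, but then the terms coming from $z_i\cp A$ and $z_j\cp B$ involve the ``inner'' letters of $A,B$, and the inductive hypothesis (which concerns $S_\h^t(U)\ap S_\h^t(W)$, not pieces of it) is no longer directly available.

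The paper avoids this by \emph{not} expanding. Using the identity you noted, $\gamma_\h^t(z_k)\cdot u=z_ku+t\,z_k\cp u$ for $u\in\Hht y$, one has $S_\h^t(z_iU)=\gamma_\h^t(z_i)S_\h^t(U)$ whenever $U\in\Hht y$, so the right-hand side is $\gamma_\h^t(z_i)S_\h^t(U)\ap\gamma_\h^t(z_j)S_\h^t(W)$. The paper then establishes a closed recursion for $\gamma_\h^t(z_k)v\ap\gamma_\h^t(z_l)w$ (equation~\eqref{eq4}) by adding Lemma~\ref{Sap}(i) to $t$ and $\h t$ multiples of Lemma~\ref{Sap}(iii). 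Substituting $v=S_\h^t(U)$, $w=S_\h^t(W)$ in \eqref{eq4} and applying $(S_\h^t)^{-1}$ reproduces the four terms of the $\tast_\h$–recursion with the correct coefficients $1,1,(1-2t),(t^2-t)$, and the induction closes. So the fix is simply to replace your ``expand into four pieces and apply Lemma~\ref{Sap}(ii)'' with ``keep the factors packaged as $\gamma_\h^t(z_i)S_\h^t(U)$ and derive~\eqref{eq4} from Lemma~\ref{Sap}(i),(iii)''.
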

\begin{proof}
From Lemma \ref{Sap} (i), we have 
$$z_k v \ap \gamma_\h^t(z_l) w = z_k \bigl( v \ap \gamma_\h^t(z_l) w \bigr) +\gamma_\h^t(z_l)(z_k v \ap w) +(1-t) (z_k \cp z_l) (v \ap w)$$
for $k, l \geq 1$. 
Adding $t \times$(Lemma \ref{Sap} (iii)) and $\h t \times$(Lemma \ref{Sap} (iii) for $k \mapsto k-1$) to this identity, we can calculate  
\begin{align*} 
& \gamma_\h^t(z_k) v \ap \gamma_\h^t(z_l) w \\
& \quad = t x^k z_p \bigl( V \ap \gamma_\h^t(z_l)w \bigr) + \h t x^{k-1}z_p\bigl( V \ap \gamma_\h^t(z_l)w \bigr) +z_k \bigl( v \ap \gamma_\h^t(z_l)w \bigr) \\
& \qquad + \gamma_\h^t(z_l) \big( \gamma_\h^t(z_k)v \ap w \bigr) + t(1-t)(z_{k+p} \cp z_l)(V \ap w) \\
&\qquad + \h t(1-t) (z_{k+p-1} \cp z_l) (V \ap w) +(1-t)(z_k \cp z_l)(v \ap w)\\
& \quad = \gamma_\h^t(z_k)\bigl( v \ap \gamma_\h^t(z_l)w \bigr) 
    + \gamma_\h^t(z_l) \big( \gamma_\h^t(z_k)v \ap w \bigr) \\
& \qquad -t(x^k + \h x^{k-1}) \Bigl( v \ap \gamma_\h^t(z_l)w - z_p\bigl( V \ap \gamma_\h^t(z_l)w \bigr) - (1-t)(z_{p} \cp z_l) (V \ap w) \Bigr) \\
& \qquad + (1-t)(z_k \cp z_l)(v \ap w). 
\end{align*}
Then using Lemma \ref{Sap} (iii) for $k=0$, it turns out that  
\begin{align}\label{eq4} 
\gamma_\h^t(z_k) v \ap \gamma_\h^t(z_l) w
& = \gamma_\h^t(z_k) \bigl( v \ap \gamma_\h^t(z_l) w \bigr) +\gamma_\h^t(z_l) \bigl( \gamma_\h^t(z_k) v \ap w\bigr)  \nonumber\\
& \quad + \bigl( -t( x^k+\h x^{k-1}) \gamma_\h^t(z_l) + (1-t) (z_k \cp z_l) \bigr) (v \ap w) 
\end{align}
holds. 
Put $S_\h^t(v)$ and $S_\h^t(w)$ instead of $v$ and $w$ in \eqref{eq4} respectively 
and apply $(S_\h^t)^{-1}$ to both sides. Then thanks to the recursive rule of $\tast_\h$, 
property for $\cp$, and Lemma \ref{Sgam} (ii), induction on total depth works to establish Proposition \ref{Stast}.  
\end{proof}
\begin{prop}[harmonic product formula for $t$-$q$MZVs] We find that the map $Z_q^t$ is a homomorphism with respect to the harmonic product $\tast_\h$, i.e., 
$$Z_q^t(v \tast_\h w) = Z_q^t(v) Z_q^t(w)$$
for $u, v \in \Hht^0$. 
\end{prop}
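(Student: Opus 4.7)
The strategy is to reduce the identity to the already-known harmonic product formula for $q$MZVs by exploiting the conjugation structure built into the operator $S_\h^t$. Specifically, Proposition \ref{Stast} shows that $\tast_\h$ is the transport of the usual harmonic product $\ap$ along $S_\h^t$, and equation \eqref{Ztq} shows that $Z_q^t$ factors as $Z_q^0 \circ S_\h^t$. Combining these two facts reduces the claim to the statement that $Z_q^0$ is an $\ap$-homomorphism, which is the $t=0$ case already proved in \cite{T}.

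Concretely, the first step is to apply Proposition \ref{Stast} to obtain
\[
v \tast_\h w = (S_\h^t)^{-1}\bigl( S_\h^t(v) \ap S_\h^t(w) \bigr).
\]
Applying $Z_q^t$ to both sides and using $Z_q^t = Z_q^0 \circ S_\h^t$ yields
\[
Z_q^t(v \tast_\h w) = Z_q^0\bigl( S_\h^t(v) \ap S_\h^t(w) \bigr).
\]
At this point I invoke the harmonic product formula for $q$MZVs, i.e.\ the specialization $t=0$ of the current proposition (proved in \cite{T}), to split the right-hand side as $Z_q^0\bigl( S_\h^t(v) \bigr) \cdot Z_q^0\bigl( S_\h^t(w) \bigr)$, and then reassemble each factor via $Z_q^t = Z_q^0 \circ S_\h^t$ to recover $Z_q^t(v) \cdot Z_q^t(w)$.

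The only non-trivial point is to check that all the maps in this chain are defined on the stated domains. One must verify that $S_\h^t$ carries $\Hht^0$ into itself, so that $Z_q^0$ makes sense on $S_\h^t(v)$ and $S_\h^t(w)$, and also that $\ap$ preserves $\Hht^0$. Both facts follow by straightforward induction from the recursive definitions: since $z_i \cp z_j = z_{i+j} + \h z_{i+j-1}$ has index $\geq 2$ as soon as $i \geq 2$, admissibility is preserved at each step. This is the only spot that requires any verification; once it is in place the theorem is a purely formal consequence of Proposition \ref{Stast}, equation \eqref{Ztq}, and the known $q$-harmonic product.
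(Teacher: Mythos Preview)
Your proof is correct and follows essentially the same approach as the paper: both use $Z_q^t = Z_q^0 \circ S_\h^t$ from \eqref{Ztq} together with Proposition~\ref{Stast} to reduce to the known fact that $Z_q^0$ is an $\ap$-homomorphism from \cite{T}. Your additional remark about $S_\h^t$ and $\ap$ preserving $\Hht^0$ is a welcome point of care that the paper leaves implicit.
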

\begin{proof}
Because of \eqref{Ztq}, Proposition \ref{Stast} and the fact that the map $Z_q^0$ is a homomorphism respect to the harmonic product $\ap$ (see \cite{T} for example), we have
\begin{align*}
Z_q^t(v \tast_\h w)  = Z_q^0 S_\h^t (v \tast_\h w)
                                 = Z_q^0 (S_\h^t(v) \ap S_\h^t(w))
                                 = Z_q^0 (S_\h^t(v))  Z_q^0(S_\h^t(w))
                                 = Z_q^t(v) Z_q^t(w). 
\end{align*}
\end{proof}
\subsection{Proof of Theorem \ref{tqKawa}}
When $t=0$, due to \cite[Theorem $4.6$]{T}, 
we have
\begin{align}\label{qKawa}
\sum_{\begin{subarray}{c} i+j=m \\ i, j\geq 1 \end{subarray}} Z_q^0(\varphi(v) \cast_\h y^i)Z_q^0(\varphi(w) \cast_\h y^j) =Z_q^0(\varphi(v \ast w) \cast_\h y^m) 
\end{align}
for any positive integer $m$ and any $v, w \in \Hht y$. 
Here, $\ast$ is the harmonic product for MZVs which is firstly introduced in \cite{H} (however its coefficient ring is extended to $\Q[\h, t]$).  
By \eqref{Ztq}, \eqref{vp} and \eqref{tch}, 
\begin{align*}
 {\rm LHS\ of\ \eqref{qKawa}} 
 & =  \sum_{\begin{subarray}{c} i+j=m \\ i, j\geq 1 \end{subarray}} 
Z_q^t (S_\h^t)^{-1} \bigl( S_\h^t \vp_\h^t (S_\h^t)^{-1}(v) \cast_\h y^i \bigr) 
Z_q^t (S_\h^t)^{-1} \bigl( S_\h^t \vp_\h^t (S_\h^t)^{-1}(w) \cast_\h y^j \bigr) \\
& = \sum_{\begin{subarray}{c} i+j=m \\ i, j\geq 1 \end{subarray}} 
Z_q^t \bigl( \vp_\h^t (S_\h^t)^{-1}(v) \tcast_\h (S_\h^t)^{-1}(y^i) \bigr)
Z_q^t \bigl( \vp_\h^t (S_\h^t)^{-1}(w) \tcast_\h (S_\h^t)^{-1}(y^j) \bigr)\\
& = \sum_{\begin{subarray}{c} i+j=m \\ i, j\geq 1 \end{subarray}} 
Z_q^t \bigl( \vp_\h^t (S_\h^t)^{-1} (v) \tcast_\h (-tx+y-\h t)^{i-1}y \bigr)\\
& \qquad \qquad \times Z_q^t \bigl( \vp_\h^t (S_\h^t)^{-1} (w) \tcast_\h (-tx+y-\h t)^{j-1}y \bigr).  
\end{align*}
Likewise, we have 
\begin{align*}
{\rm RHS\ of\ \eqref{qKawa}} 
& = -Z_q^t(S_\h^t)^{-1}(S_\h^t \vp_\h^t (S_\h^t)^{-1} (v \ast w) \cast_\h y^m)\\
& = -Z_q^t(\vp_\h^t (S_\h^t)^{-1} (v \ast w) \tcast_\h (S_\h^t)^{-1} (y^m)) \\
& = -Z_q^t(\vp_\h^t (S_\h^t)^{-1} (v \ast w) \tcast_\h (-tx+y-\h t)^{m-1} y)). 
\end{align*}
Hence we obtain Theorem \ref{tqKawa}. 

\begin{rmk}
By setting $t=1$ in Theorem \ref{tqKawa}, we have Kawashima type relation for $q$MZSVs. 
Taking the limit as $q \to 1$, we have Kawashima type relation for $t$-MZVs:
\begin{align*}
\sum_{\begin{subarray}{c} i+j=m \\ i, j\geq 1 \end{subarray}} Z^t(\varphi^t(v) \tcast (-tx+y)^{i-1}y)Z^t(\varphi^t(w) \tcast (-tx+y)^{j-1}y) &\\
=-Z^t(\varphi^t(v \tast w) \tcast (-tx+y)^{m-1}y)   
\end{align*}
for any positive integer $m$ and any $v, w \in \Ht y$. 
Here $Z^t, \vp^t, \tcast$, $\tast$ and $\Ht$ are regarded as each of $Z_q^t, \vp_\h^t, \tcast_\h$, $\tast_\h$ and $\Hht$ by assuming $\h = 0$ and $q \to 1$. 
This is established in \cite{TW}. 
\end{rmk}
\section{Cyclic sum formula for $t$-$q$MZVs}\label{tqC}
\subsection{Algebraic setup}
Let $n$ be a positive integer. We denote the $\Hht$-bimodule structure on ${\Hht}^{\otimes (n+1)}$ by ``$\diamond$'' defined by
\begin{align*}
a \diamond (w_1 \otimes w_2 \otimes \cdots \otimes w_n \otimes w_{n+1}) 
& = w_1 \otimes w_2 \otimes \cdots \otimes w_n \otimes a w_{n+1}, \\
(w_1 \otimes w_2 \otimes \cdots \otimes w_n \otimes w_{n+1})  \diamond b
& = w_1 b \otimes w_2 \otimes \cdots \otimes w_n \otimes w_{n+1} 
\end{align*}
for $a, b, w_1, w_2, \ldots, w_{n+1} \in \Hht$. 
For a positive integer $n$, we define the $\Q[\h, t]$-linear map 
$\CC_{n, t}^{(\h)}$ : $\Hht \longrightarrow {\Hht}^{\otimes (n+1)}$ by
$$\CC_{n, t}^{(\h)}(x) = -\CC_{n, t}^{(\h)}(y) = x \otimes ((1-t)x + y -\h t)^{\otimes (n-1)} \otimes y$$
and Leibniz rule
$$\CC_{n, t}^{(\h)}(v w) = \CC_{n, t}^{(\h)}(v) \diamond (\gamma_\h^t)^{-1}(w)
                                           + (\gamma_\h^t)^{-1}(v) \diamond \CC_{n, t}^{(\h)}(w)$$
for any $v, w \in \Hht$. Note that $\CC_{n, t}^{(\h)}(1)=0$.                                           
Let $M_n$ : $\Hht^{\otimes (n+1)} \longrightarrow \Hht$ denotes the multiplication map, i.e., 
$$M_n(w_1 \otimes w_2 \otimes \cdots \otimes w_n \otimes w_{n+1}) = w_1 w_2 \cdots w_n w_{n+1}. $$
We put $\rho_{n, t}^{(\h)} = M_n \CC_{n, t}^{(\h)}$. 

\begin{lem}\label{rhoS}
$\rho_{n, 0}^{(\h)} = S_\h^t \rho_{n, t}^{(\h)}. $
\end{lem}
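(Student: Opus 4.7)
The plan is to compute $\CC_{n,t}^{(\h)}$ explicitly on an arbitrary word by iterating the Leibniz rule, and then to compare $S_\h^t \rho_{n,t}^{(\h)}$ with $\rho_{n,0}^{(\h)}$ directly. The essential ingredients are Lemma \ref{Sgam}(ii), which converts $S_\h^t$ on words ending in $y$ into an application of the algebra automorphism $\gamma_\h^t$, together with the two identities $\gamma_\h^t(x) = x$ and $\gamma_\h^t((1-t)x + y - \h t) = x + y$ (and the specialisations $\gamma_\h^0 = \mathrm{id}$ and $(1-0)x + y - \h\cdot 0 = x + y$).

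First I would establish, by induction on $m$ using the Leibniz rule, the explicit formula
\begin{align*}
\CC_{n,t}^{(\h)}(a_1 \cdots a_m) = \sum_{i=1}^m \epsilon_{a_i} \bigl( x (\gamma_\h^t)^{-1}(a_{i+1} \cdots a_m) \bigr) \otimes \bigl((1-t)x + y - \h t\bigr)^{\otimes(n-1)} \otimes \bigl( (\gamma_\h^t)^{-1}(a_1 \cdots a_{i-1}) y \bigr),
\end{align*}
valid for any word $a_1 a_2 \cdots a_m$ with $a_i \in \{x, y\}$, where $\epsilon_x = 1$ and $\epsilon_y = -1$. The induction relies on the observation that the right action $T \diamond (\gamma_\h^t)^{-1}(w)$ only alters the first tensor factor while the left action $(\gamma_\h^t)^{-1}(v) \diamond T$ only alters the last, so the two Leibniz terms append factors on opposite ends without interfering with one another.

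Applying $M_n$ then gives
\begin{align*}
\rho_{n,t}^{(\h)}(w) = \sum_{i=1}^m \epsilon_{a_i} \, x (\gamma_\h^t)^{-1}(a_{i+1} \cdots a_m) \bigl((1-t)x + y - \h t\bigr)^{n-1} (\gamma_\h^t)^{-1}(a_1 \cdots a_{i-1}) y,
\end{align*}
which lies in $\Hht y$ since every summand ends in $y$. Hence Lemma \ref{Sgam}(ii) applies, and once $\gamma_\h^t$ is distributed across products, every $(\gamma_\h^t)^{-1}$ cancels and the middle factor $(1-t)x+y-\h t$ is carried to $x+y$, producing
\begin{align*}
S_\h^t \rho_{n,t}^{(\h)}(w) = \sum_{i=1}^m \epsilon_{a_i} \, x \, a_{i+1} \cdots a_m \, (x+y)^{n-1} \, a_1 \cdots a_{i-1} \, y.
\end{align*}
This is precisely the specialisation at $t = 0$ of the previous display, hence equals $\rho_{n,0}^{(\h)}(w)$; the lemma then follows by $\Q[\h, t]$-linearity.

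The only real obstacle is bookkeeping in the first step: tracking through the iterated Leibniz rule how the bimodule action $\diamond$ interacts with $(\gamma_\h^t)^{-1}$ so that each summand has the correct build-up $x(\gamma_\h^t)^{-1}(a_{i+1}\cdots a_m)$ in the first tensor slot and $(\gamma_\h^t)^{-1}(a_1\cdots a_{i-1})y$ in the last. Once the explicit formula is in hand, Lemma \ref{Sgam}(ii) together with the two key values of $\gamma_\h^t$ makes the rest collapse mechanically.
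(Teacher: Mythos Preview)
Your proposal is correct and follows essentially the same strategy as the paper's proof: expand $\CC_{n,t}^{(\h)}$ on an arbitrary word via the iterated Leibniz rule, apply $M_n$, and then use Lemma~\ref{Sgam}(ii) together with $\gamma_\h^t(x)=x$ and $\gamma_\h^t((1-t)x+y-\h t)=x+y$ to identify $S_\h^t\rho_{n,t}^{(\h)}(w)$ with $\rho_{n,0}^{(\h)}(w)$. The only cosmetic difference is that the paper parameterizes words as $z_{k_1}\cdots z_{k_l}x^m$ and splits the Leibniz expansion into three sums, whereas you work letter-by-letter with a single sum indexed by a sign $\epsilon_{a_i}$; the underlying argument is the same.
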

\begin{proof}
Because of the linearity, it is enough to calculate $\CC_{n, t}^{\h} (w)$ for $w=z_{k_1} z_{k_2} \cdots z_{k_l} x^m$ ($l \geq 0$, $k_1, k_2, \ldots, k_l \geq 1$ and $m \geq 1$). 
By definition of $\CC_{n, t}^{(\h)}$, 

\begin{align*}
\CC_{n, t}^{(\h)}(w)
 & =  \sum_{i=1}^{l} \sum_{j=1}^{k_i-1} 
        (\gamma_\h^t)^{-1}(x^{k_1-1}y \cdots x^{k_{i-1}-1}y x^{j-1}) 
          \diamond \CC_{n, t}^{(\h)}(x) 
         \diamond 
        (\gamma_\h^t)^{-1}(x^{k_i-j-1}y x^{k_{i+1}-1}y \cdots x^{k_l-1}y x^{m})\\
    & \quad +\sum_{i=1}^{l} 
        (\gamma_\h^t)^{-1}(x^{k_1-1}y \cdots x^{k_{i-1}-1}yx^{k_i-1}) 
          \diamond \CC_{n, t}^{(\h)}(y) \diamond 
        (\gamma_\h^t)^{-1}(x^{k_{i+1}-1}y \cdots x^{k_l-1}y x^{m})\\
    & \quad +\sum_{j=1}^{m} (\gamma_\h^t)^{-1}(x^{k_1-1}y \cdots x^{k_l-1}y x^{j-1}) 
          \diamond \CC_{n, t}^{(\h)}(x) \diamond (\gamma_\h^t)^{-1}(x^{m-j}) \\
 & = \sum_{i=1}^{l} \sum_{j=1}^{k_i-1} 
        x \c (\gamma_\h^t)^{-1}(x^{k_i-j-1}y x^{k_{i+1}-1}y \cdots x^{k_l-1}y x^{m})\\
      & \qquad  \otimes ((1-t)x +y-\h t)^{\otimes (n-1)} \otimes 
          (\gamma_\h^t)^{-1}(x^{k_1-1}y \cdots x^{k_{i-1}-1}y x^{j-1}) y
        \\
    & \quad - \sum_{i=1}^{l} 
        x \c (\gamma_\h^t)^{-1}(x^{k_{i+1}-1}y \cdots x^{k_l-1}y x^{m})\\
        & \qquad \otimes ((1-t)x +y-\h t)^{\otimes (n-1)} \otimes 
        (\gamma_\h^t)^{-1}(x^{k_1-1}y \cdots x^{k_{i-1}-1}yx^{k_i-1}) y\\
    & \quad +\sum_{j=1}^{m} 
        x \c (\gamma_\h^t)^{-1}(x^{m-j})
         \otimes ((1-t)x +y-\h t)^{\otimes (n-1)} \otimes 
           (\gamma_\h^t)^{-1}(x^{k_1-1}y \cdots x^{k_l-1}y x^{j-1})y. 
\end{align*} 
Hence we obtain
\begin{align*}
S_\h^t \rho_{n, t}^{(\h)} (w) 
 & = S_\h^t M_n \CC_{n, t}^{\h} (w) \\
 & = \sum_{i=1}^{l} \sum_{j=1}^{k_i-1} 
       \gamma_\h^t(x) \c x^{k_i-j-1}y x^{k_{i+1}-1}y \cdots x^{k_l-1}y x^m \c \{ \gamma_\h^t((1-t)x+y-\h t)\}^{n-1}\\
       & \qquad \times x^{k_1-1}y \cdots x^{k_{i-1} -1}y x^{j-1}y \\
     & \quad - \sum_{i=1}^{l}\gamma_\h^t(x) \c x^{k_{i+1}-1}y \cdots x^{k_l-1} y x^m \c 
     \{ \gamma_\h^t((1-t)x+y-\h t)\}^{n-1} \c x^{k_1-1}y \cdots x^{k_i-1}y\\
     & \quad + \sum_{j=1}^{m} \gamma_\h^t(x) \c x^{m-j} \c \{ \gamma_\h^t((1-t)x+y-\h t)\}^{n-1} \c x^{k_1-1}y \cdots x^{k_l-1}y x^{j-1}y\\
 & = \sum_{i=1}^{l} \sum_{j=1}^{k_i-1} 
       x^{k_i-j}y x^{k_{i+1}-1}y \cdots x^{k_l-1}y x^m (x+y)^{n-1} x^{k_1-1}y \cdots x^{k_{i-1} -1}y x^{j-1}y \\
     & \quad - \sum_{i=1}^{l} 
       x \c x^{k_{i+1}-1}y \cdots x^{k_l-1} y x^m  (x+y)^{n-1} x^{k_1-1}y \cdots x^{k_i-1}y\\
     & \quad + \sum_{j=1}^{m} 
            x^{m-j+1} (x+y)^{n-1} x^{k_1-1}y \cdots x^{k_l-1}y x^{j-1}y\\
            = & M_n \CC_{n, 0}^{(\h)}(w) = \rho_{n, 0}^{(\h)}(w).    
\end{align*}
This completes the proof.  
\end{proof}
\subsection{Proof of Theorem \ref{tqCSF}}
First we prove the next two propositions. 
\begin{prop}\label{linKawa}
$\rho_{n, t}^{(\h)}(\check{\mH}_{\h, t}^1) \subset \ker Z_q^t$, where $\check{\mH}_{\h, t}^1$ denotes the subvector space of $\Hht^1$ generated by words of $\Hht^1$ expect for powers of $y$.  
\end{prop}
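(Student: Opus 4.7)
My plan is to reduce the statement to the case $t=0$ and then invoke the linear Kawashima-type relation for $q$MZVs already known from \cite{T}.

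First I would combine Lemma \ref{rhoS} with the identity \eqref{Ztq} to observe that, for any $w \in \Hht^1$,
\[ Z_q^t\bigl(\rho_{n,t}^{(\h)}(w)\bigr) = Z_q^0\bigl(S_\h^t\,\rho_{n,t}^{(\h)}(w)\bigr) = Z_q^0\bigl(\rho_{n,0}^{(\h)}(w)\bigr). \]
Thus the assertion $\rho_{n,t}^{(\h)}(\check{\mH}_{\h,t}^1) \subset \ker Z_q^t$ is equivalent to $\rho_{n,0}^{(\h)}(\check{\mH}_{\h,t}^1) \subset \ker Z_q^0$; in particular all dependence on the interpolation parameter $t$ is absorbed by $S_\h^t$ and disappears after applying $Z_q^t$. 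Note also that the domain $\check{\mH}_{\h,t}^1$ is $S_\h^t$-stable (indeed $S_\h^t$ preserves the decomposition into words starting with an $x$ versus powers of $y$), so no careful tracking of the excluded subspace is needed.

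At $t=0$ the automorphism $\gamma_\h^t$ becomes the identity, so $\CC_{n,0}^{(\h)}$ is a genuine derivation characterized by $\CC_{n,0}^{(\h)}(x) = -\CC_{n,0}^{(\h)}(y) = x \otimes (x+y-\h)^{\otimes(n-1)} \otimes y$, and $\rho_{n,0}^{(\h)} = M_n \CC_{n,0}^{(\h)}$ is exactly Takeyama's Kawashima-type operator (modulo the substitution $\h \mapsto 1-q$ induced by $f$). The linear Kawashima-type relation proved in \cite{T} then asserts that $Z_q^0$ vanishes on its image when restricted to words other than pure powers of $y$, which is precisely what the proposition requires.

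The main obstacle is merely bookkeeping: verifying that the conventions in \cite{T} (the direction of the Leibniz rule, the sign of $\CC_{n,0}^{(\h)}(y)$, the position of the factor $(x+y-\h)^{n-1}$ after multiplication, and the admissibility condition imposed by excluding powers of $y$) translate correctly into the present setup. Once this dictionary is in place, no further calculation is required beyond the two one-line reductions above.
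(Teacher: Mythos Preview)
Your reduction via Lemma~\ref{rhoS} and \eqref{Ztq} is exactly the paper's first move. Two points need correction, however. First, at $t=0$ the middle tensor factor is $(x+y)^{\otimes(n-1)}$, not $(x+y-\h)^{\otimes(n-1)}$: substituting $t=0$ into $(1-t)x+y-\h t$ gives $x+y$, so $\rho_{n,0}^{(\h)}$ carries no $\h$ at all (which is precisely why the purely MZV result of \cite{TW0} can be invoked verbatim over the extended coefficient ring). Second, and more substantively, \cite{T} does not state a linear Kawashima relation phrased in terms of a derivation like $\rho_{n,0}^{(\h)}$; what \cite{T} supplies is the quadratic identity \eqref{qKawa}. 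The paper therefore inserts an ingredient your outline is missing: it quotes \cite[Proposition~2.5]{TW0} for the algebraic inclusion $\rho_{n,0}^{(\h)}(\check{\mH}_{\h,t}^1)\subset L_x\vp(\Hht y\ast\Hht y)$, pulls this back through $(S_\h^t)^{-1}$ using $S_\h^t L_x=L_x S_\h^t$ and the definition of $\vp_\h^t$, and only then applies Theorem~\ref{tqKawa} at $m=1$ (observing that $L_x w=w\tcast_\h y$ on $\Hht y$). Without this bridge from \cite{TW0}, the appeal to \cite{T} alone does not close the argument.
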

\begin{proof}
According to \cite[Proposition $2.5$]{TW0}, we have 
\begin{align}\label{rho}
\rho_{n, 0}^{(\h)}(\check{\mH}_{\h, t}^1) \subset L_x \vp (\Hht y \ast \Hht y), 
\end{align}
where $L_x$ is the left multiplication by $x$ defined by $L_x(w)=xw$ for any $w \in \Hht$. 
By \eqref{rho} and Lemma \ref{rhoS}, we have 
\begin{align*}
\rho_{n, t}^{(\h)}(\check{\mH}_{\h, t}^1) 
& =  (S_\h^t)^{-1} \rho_{n, 0}^{(\h)} (\check{\mH}_{\h, t}^1) \subset (S_\h^t)^{-1} (L_x \vp(\Hht y \ast \Hht y)). 
\end{align*} 
Also we find that 
\begin{align}\label{Lx}
S_\h^t L_x = L_x S_\h^t
\end{align}
by definition of $L_x$ and $S_\h^t$. 
By \eqref{vp} and \eqref{Lx}, we obtain 
\begin{align*}
\rho_{n, t}^{(\h)}(\check{\mH}_{\h, t}^1) 
& \subset L_x \vp_{\h}^t (S_\h^t)^{-1}(\Hht y \ast \Hht y) = \vp_{\h}^t (S_\h^t)^{-1}(\Hht y \ast \Hht y) \tcast_\h y. 
\end{align*} 
Therefore we conclude the proposition because of Theorem \ref{tqKawa} for the case of  $m=1$. 
\end{proof}
\begin{prop}\label{rho2}
For cyclically equivalent words $v, w \in \Hht$, we have $\rho_{1, t}^{(\h)}(v) = \rho_{1, t}^{(\h)}(w).$
\end{prop}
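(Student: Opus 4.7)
The plan is to derive a closed-form expression for $\rho_{1, t}^{(\h)}$ on an arbitrary monomial word and then verify by a change of summation index that the result depends only on the cyclic equivalence class of the input.

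First I would write a general word as $w = a_1 a_2 \cdots a_n$ with $a_i \in \{x, y\}$ and set $\epsilon_i = +1$ if $a_i = x$ and $\epsilon_i = -1$ if $a_i = y$, so that the base cases collapse to $\CC_{1, t}^{(\h)}(a_i) = \epsilon_i (x \otimes y)$. Iterating the Leibniz rule defining $\CC_{1, t}^{(\h)}$ and unfolding the $\diamond$-action on $\Hht \otimes \Hht$ (noting that for $n=1$ there is no middle tensor factor), an induction on length should give
$$\CC_{1, t}^{(\h)}(w) = \sum_{i=1}^{n} \epsilon_i \, x (\gamma_\h^t)^{-1}(a_{i+1} \cdots a_n) \otimes (\gamma_\h^t)^{-1}(a_{1} \cdots a_{i-1}) y.$$

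Next I would apply $M_1$ and use that $(\gamma_\h^t)^{-1}$ is an algebra automorphism of $\Hht$ to merge the two tensor slots into a single product, obtaining
$$\rho_{1, t}^{(\h)}(w) = x \c (\gamma_\h^t)^{-1}\!\Biggl( \sum_{i=1}^{n} \epsilon_i \, a_{i+1} a_{i+2} \cdots a_{n} a_{1} \cdots a_{i-1} \Biggr) y.$$
The $i$-th summand is the word obtained by deleting $a_i$ from $w$ and reading the remaining letters cyclically from the position immediately after the deletion, weighted by the sign of the deleted letter.

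With this closed form in hand, cyclic invariance becomes pure bookkeeping. For a rotation $w' = a_k a_{k+1} \cdots a_n a_1 \cdots a_{k-1}$, the bijection $j \mapsto i \equiv j + k - 1 \pmod{n}$ on $\{1, \ldots, n\}$ identifies each summand indexed by $j$ (together with its sign) for $w'$ with the corresponding summand for $w$, so the bracketed sums coincide. Since the outer operator $x \c (\gamma_\h^t)^{-1}(-) y$ is $\Q[\h, t]$-linear, this yields $\rho_{1, t}^{(\h)}(w) = \rho_{1, t}^{(\h)}(w')$, and the general case follows by transitivity of cyclic equivalence. The main obstacle I anticipate is the Leibniz expansion itself: one has to keep careful track of which $(\gamma_\h^t)^{-1}$-transformed segment is acted upon by $\diamond$ from which side, and of how the base tensor $x \otimes y$ gets right-multiplied in its first slot and left-multiplied in its second slot. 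Once that is set up correctly, the remaining re-indexing argument is elementary.
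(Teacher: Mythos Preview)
Your proposal is correct and follows essentially the same route as the paper: expand $\CC_{1,t}^{(\h)}$ on a word $u_1\cdots u_l$ via the Leibniz rule to obtain $\sum_i {\rm sgn}(u_i)\, x\,(\gamma_\h^t)^{-1}(u_{i+1}\cdots u_l)\otimes(\gamma_\h^t)^{-1}(u_1\cdots u_{i-1})\,y$, apply $M_1$, and observe that the resulting sum is manifestly invariant under cyclic shifts. The only cosmetic differences are that you factor $(\gamma_\h^t)^{-1}$ outside the sum and spell out the re-indexing bijection explicitly, whereas the paper leaves these implicit.
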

\begin{proof}
Let $u_1, u_2, \ldots, u_l \in \{ x, y \}$ and ${\rm sgn} (u) = 1$ or $-1$ according to $u=x$ or $y$. 
Because of  
$$\CC_{1, t}^{(\h)} (u) = {\rm sgn}(u)(x \otimes y)$$
for $u \in \{ x, y \}$, we have
\begin{align*}
\CC_{1, t}^{(\h)}(u_1 u_2 \cdots u_l) 
& = \sum_{i=1}^{l} (\gamma_\h^t)^{-1} (u_1 \cdots u_{i-1}) \diamond \CC_{1, t}^{(\h)} (u_i) \diamond (\gamma_\h^t)^{-1} (u_{i+1} \cdots u_l)\\
& = \sum_{i=1}^{l} {\rm sgn}(u_i) x \c (\gamma_\h^t)^{-1} (u_{i+1} \cdots u_l) \otimes (\gamma_\h^t)^{-1} (u_1 \cdots u_{i-1}) \c y, 
\end{align*}
where we assume $u_1 \cdots u_{i-1}=1$ if $i=1$ and $u_{i+1} \cdots u_l =1$ if $i=l$. 
Therefore we obtain 
\begin{align*}
\rho_{1, t}^{(\h)}(u_1 u_2 \cdots u_l) 
& = \sum_{i=1}^{l} {\rm sgn}(u_i) x \c (\gamma_\h^t)^{-1} (u_{i+1} \cdots u_l u_1 \cdots u_{i-1}) \c y.  
\end{align*} 
Since the right-hand side does not change under the cyclic permutations of $\{ u_1, u_2, \ldots, u_l \}$, we conclude the proposition.  
\end{proof}
Now we prove Theorem \ref{tqCSF}. 
We calculate 
\begin{align*}
\CC_{1, t}^{(\h)} & (\gamma_\h^t (z_{k_1} z_{k_2} \cdots z_{k_l}))\\
 & =  \CC_{1, t}^{(\h)} (x^{k_1-1}(tx+y+\h t) \cdots x^{k_l-1}(tx+y+\h t))\\
 & =  \sum_{i=1}^{l} \sum_{j=0}^{k_i-2} 
      (\gamma_\h^t)^{-1}( x^{k_1-1}(tx+y+\h t) \cdots x^{k_{i-1}-1}(tx+y+\h t) x^j) 
       \diamond \CC_{1, t}^{(\h)}(x) \\
     &  \qquad \diamond (\gamma_\h^t)^{-1}( x^{k_i-j-2}(tx+y+\h t)x^{k_{i+1}-1}(tx+y+\h t) \cdots x^{k_l-1}(tx+y+\h t) )\\
     & \quad 
     + \sum_{i=1}^{l} (\gamma_\h^t)^{-1} ( x^{k_1-1}(tx+y+\h t) \cdots x^{k_{i-1}-1}(tx+y+\h t) x^{k_{i}-1}) 
     \diamond \CC_{1, t}^{(\h)}(tx+y+\h t) \\
     & \qquad \diamond (\gamma_\h^t)^{-1}( x^{k_{i+1}-1}(tx+y+\h t) \cdots x^{k_l-1}(tx+y+\h t) )\\
& = \sum_{i=1}^{l} \sum_{j=0}^{k_i-2} x^{k_i-j-1}y x^{k_{i+1}-1} y \cdots x^{{k_l}-1} y \otimes x^{k_1-1}y \cdots x^{k_{i-1}-1}y x^{j} y\\
& \quad  
     + (t-1) \sum_{i=1}^{l} x \c x^{k_{i+1}-1} y \cdots x^{k_{l}-1} y \otimes x^{k_{1}-1} y\cdots x^{k_i-1} y  
\end{align*}
and 
$$\CC_{1, t}^{(\h)} \bigl( x^{k-l}(x + \h)^l \bigr)
= \sum_{i=0}^{l} \binom{l}{i} \h^{i} \sum_{j=1}^{k-i} x^{k-i-j+1} \otimes x^{j-1} y, $$
where $k=k_1 + k_2+ \cdots + k_l$. 
Hence we have
\begin{align*}
\rho_{1, t}^{(\h)} (\gamma_\h^t (z_{k_1} z_{k_2} \cdots z_{k_l}) - t^l 
 x^{k-l}(x + \h)^l )
  = &  \sum_{i=1}^{l} \sum_{j=0}^{k_i-2} z_{k_i-j} z_{k_{i+1}} \cdots z_{{k_l}} z_{k_1} \cdots z_{k_{i-1}} z_{j+1}\\
& - (1-t) \sum_{i=1}^{l} z_{k_{i}+1} z_{k_{i+1}} \cdots z_{k_{l}} z_{k_{1}} \cdots z_{k_{i-1}}\\
& -  t^l \sum_{i=0}^{l} (k-i) \binom{l}{i} \h^i z_{k-i+1}. 
\end{align*}
If $(k_1, k_2, \ldots, k_l) \neq (1, 1, \ldots, 1)$, 
each term of $\gamma_\h^t (z_{k_1} z_{k_2} \cdots z_{k_l}) - t^l 
x^{k-l}(x + \h)^l$ modulo cyclic permutation can be regarded as an element in $\check{\mH}_{\h, t}^1$. 
Therefore we obtain Theorem \ref{tqCSF} by Proposition \ref{linKawa} and \ref{rho2}. 
\section{Hoffman type relation for $t$-$q$MZVs}\label{tqH}
Finally, as another application of Theorem \ref{tqKawa} for $m=1$, 
we show that Theorem \ref{tqKawa} includes the following Hoffman type relation for $t$-$q$MZVs. 
\begin{thm}
For positive integers $k_1, k_2, \ldots, k_l$ with $k_1 \geq 2$, we have
\begin{align*}
& \sum_{i=1}^{l}\sum_{j=0}^{k_i-2} \tq(k_1, \ldots, k_{i-1}, k_i-j, j+1, k_{i+1}, \ldots, k_l)\\
& = \sum_{i=1}^{l} \{1+ (k_i-2+ \delta_{i, l})t\} \tq(k_1, \ldots, k_{i-1}, k_i+1, k_{i+1}, \ldots, k_l)\\
& \ \ \ \ + t(t-1) \sum_{i=1}^{l-1} \tq(k_1, \ldots, k_{i-1}, k_i+k_{i+1}+1, k_{i+2}, \ldots, k_l)\\
& \ \ \ \ +(1-q) \sum_{i=1}^{l}\left\{ t (k_i-1) \tq (k_1, \ldots, k_l) 
+t(t-1)\tq(k_1, \ldots, k_{i-1}, k_i+k_{i+1}, k_{i+2}, \ldots, k_l) \right\},  
\end{align*}
where $\delta_{i, l}$ stands for Kronecker's delta.
\end{thm}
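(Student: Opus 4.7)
The plan is to specialize Theorem \ref{tqKawa} to $m = 1$. In that case the left-hand side is an empty sum, and after using Proposition \ref{Stast} to rewrite $(S_\h^t)^{-1}(S_\h^t(v) \ast S_\h^t(w))$ as $v \tast_\h w$, the theorem collapses to the vanishing
\begin{align*}
Z_q^t\bigl( \vp_\h^t(v \tast_\h w) \tcast_\h y \bigr) = 0 \qquad (v, w \in \Hht y).
\end{align*}
I would take $v = z_{k_1} z_{k_2} \cdots z_{k_l}$ and $w = z_1 = y$, both lying in $\Hht y$, and show that expanding the vanishing produces exactly the difference between the two sides of the claimed Hoffman type relation.

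The first step is to transport the calculation to the $t = 0$ picture. Using $Z_q^t = Z_q^0 S_\h^t$ from \eqref{Ztq}, the identity $S_\h^t(u \tcast_\h y) = S_\h^t(u) \cast_\h y$ (immediate from \eqref{tch} and $S_\h^t(y) = y$), the intertwining $S_\h^t \vp_\h^t = -\vp S_\h^t$ from \eqref{vp}, and Proposition \ref{Stast} in the form $S_\h^t(v \tast_\h z_1) = S_\h^t(v) \ap z_1$, the vanishing becomes
\begin{align*}
Z_q^0\bigl( \vp(S_\h^t(v) \ap z_1) \cast_\h y \bigr) = 0.
\end{align*}
Moreover, Lemma \ref{Sgam}~(ii) gives
\begin{align*}
S_\h^t(v) = x^{k_1-1}(tx + y + \h t)\, x^{k_2-1}(tx + y + \h t) \cdots x^{k_{l-1}-1}(tx + y + \h t)\, x^{k_l-1} y,
\end{align*}
so distributing the $l - 1$ interior trinomials produces $3^{l - 1}$ monomials corresponding exactly to the three candidates ``$,$'', ``$+$'', ``$-1+$'' in the definition of $t$-$q$MZVs, with the weight drop in the ``$-1+$'' case carrying the factor $\h = 1 - q$. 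The fact that the rightmost $y$ of $v$ is untouched by $S_\h^t$ will be the source of the $\delta_{i, l}$ asymmetry in the coefficient $1 + (k_i - 2 + \delta_{i, l})t$.

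I would then expand $S_\h^t(v) \ap z_1$ term by term via the harmonic-product recursion
\begin{align*}
z_r U \ap z_1 = z_r(U \ap z_1) + z_1 z_r U + (z_{r+1} + \h z_r) U,
\end{align*}
apply $\vp$ followed by $\cast_\h y$, substitute $\h = 1 - q$, and collect. The two ``insertion'' contributions $z_r(U \ap z_1)$ and $z_1 z_r U$ account, after the subsequent operators, for the full left-hand sum $\sum_{i, j} \tq(k_1, \ldots, k_i - j, j + 1, \ldots, k_l)$. The ``collision'' contribution $(z_{r+1} + \h z_r) U$ produces the $\tq(k_1, \ldots, k_i + 1, \ldots, k_l)$ summands and, via $\h = 1 - q$, the $(1 - q)t(k_i - 1)\tq(k_1, \ldots, k_l)$ summands. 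A collision adjacent to a $tx$ or $\h t$ drawn from an interior factor of $S_\h^t(v)$ yields, respectively, the $t(t - 1)\tq(\ldots, k_i + k_{i+1} + 1, \ldots)$ and $(1 - q)t(t - 1)\tq(\ldots, k_i + k_{i+1}, \ldots)$ summands. The main obstacle is the combinatorial bookkeeping: one must match every $t^a(1 - q)^b$ coefficient coming from the $3^{l-1}$ monomials of $S_\h^t(v)$ exactly to the five-line expression stated in the theorem, paying particular attention to collisions against the untouched final $y$ (which produce the extra $t$ encoded by $\delta_{i, l}$). This is a direct but lengthy calculation, comparable in character to those in the proofs of Theorems \ref{tqKawa} and \ref{tqCSF}.
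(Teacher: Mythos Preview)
Your reduction of Theorem \ref{tqKawa} at $m=1$ to a vanishing statement is fine, but the next two steps both break.

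First, a product mismatch. The right-hand side of Theorem \ref{tqKawa} carries the \emph{ordinary} harmonic product $\ast$ of \cite{H}, whereas Proposition \ref{Stast} relates $\tast_\h$ to the $q$-deformed product $\ap$; these differ by the $\h$-correction in $z_i \cp z_j = z_{i+j} + \h z_{i+j-1}$. Hence your rewriting $(S_\h^t)^{-1}(S_\h^t(v)\ast S_\h^t(w)) = v \tast_\h w$ and the displayed consequence $Z_q^0\bigl(\vp(S_\h^t(v)\ap z_1)\cast_\h y\bigr)=0$ are not what the theorem gives; the correct identity has $\ast$ in place of $\ap$, and the recursion you then invoke (with the $\h z_r$ term) is the wrong one. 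Second, and more seriously, even with the right product the choice $v=z_{k_1}\cdots z_{k_l}$, $w=y$ does not produce the Hoffman combination. Since $u\cast_\h y = xu$ and $\vp(z_r)=-(x+y)^{r-1}y$, the element $x\,\vp(S_\h^t(v)\ast y)$ is a large signed sum of words of many depths, not the depth-$(l{+}1)$/depth-$l$ pattern in the statement. Already at $t=0$, $l=1$, $k_1=2$ one gets $x\,\vp(z_2\ast y)=xy^3-x^3y$, which lies in $\ker Z_q^0$ but is not $\partial_1(z_2)=xy^2-x^2y$; the relation you would extract is $\q(2,1,1)=\q(4)$, not the Hoffman relation $\q(2,1)=\q(3)$. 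So the ``insertion/collision'' bookkeeping cannot match the operators actually in play.

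The paper takes a different and shorter route. It introduces the derivation $\partial_1$ on $\Hht$ determined by $\partial_1(x)=-\partial_1(y)=xy$, for which $\partial_1(\Hht^0)\subset\ker Z_q^0$ is known from \cite{B}; conjugating by $S_\h^t$ and using \eqref{Ztq} gives $(S_\h^t)^{-1}\partial_1 S_\h^t(z_{k_1}\cdots z_{k_l})\in\ker Z_q^t$, and expanding this single expression via Lemma \ref{Sgam}(ii) yields the stated identity directly. The connection to Theorem \ref{tqKawa} is only the inclusion $\partial_1(\Hht^0)\subset L_x\vp(\Hht y\ast\Hht y)$, not that one naive pair $(v,w)$ recovers $\partial_1$.
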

\begin{proof}
Let $\partial_1$ : $\Hht \longrightarrow \Hht$ denotes the derivation
determined by 
$$\partial_1(x) = -\partial_1(y) = xy. $$
Calculate 
\begin{align*}
(S_\h^t)^{-1} \partial_1 S_\h^t (z_{k_1} \cdots z_{k_l}) 
\ \ (k_1 \geq 2), 
\end{align*}
which is known to be an element in $\ker Z_q^t$ 
because of \eqref{Ztq}, $S_\h^t (z_{k_1} \cdots z_{k_l}) \in \Hht^0$ and $\partial_1(\Hht^0) \subset \ker Z_q$ (see \cite{B} for the last one), and we obtain the theorem.  
\end{proof}
We notice that when $t=0$ this theorem is reduced to Hoffman type relation for $q$MZVs  proved in \cite{B}. 
Taking the limit as $q \to 1$, we have Hoffman type relation for $t$-MZVs proved in \cite{W} 
by another method using the double shuffle product structure for $t$-MZVs. 

\subsubsection*{Acknowledgments}
The author is supported by the Grant-in-Aid for Young Scientists (B) No. 15K17523, Japan Society for the Promotion of Science. 

\end{document}